\documentclass{amsart}

\usepackage{amsmath, amssymb, amsthm}
\usepackage[mathcal]{eucal}
\usepackage{enumerate}
\usepackage{amsfonts}
\usepackage{amscd}
\usepackage{mathrsfs}
\usepackage{graphics}
\usepackage{upgreek}
\usepackage{dirtytalk}

\newtheorem{theorem}{Theorem}

\newtheorem{lemma}{Lemma}

\numberwithin{equation}{section}

\def\supp{{\rm supp\,}}

\def\sinc{{\rm sinc}}
\def\ti{\tilde}

\def\ZZ{\ensuremath{\mathbb Z}}

\def\ZR{\ensuremath{\mathbb R}}

\def\Z1{\ensuremath{\mathbf 1}}
\def\eps{\ensuremath{\epsilon}}

\usepackage[dvipsinames]{xcolor}
\definecolor{midnightblue}{HTML}{0059b3}
\definecolor{chromered}{HTML}{f14233}
 \RequirePackage[colorlinks,citecolor=midnightblue,urlcolor=midnightblue,breaklinks=true,
linkbordercolor=midnightblue,linkcolor=midnightblue]{hyperref}

\makeatletter
\def\env@cases{%
  \let\@ifnextchar\new@ifnextchar
  \left\lbrace
  \def\arraystretch{0.9}%
  \array{@{}l@{\quad}l@{}}}
\makeatother

\title[Lacunary Carleson for NLFT]{Weak $L^2$ bound of the lacunary Carleson operator for the non-linear Fourier transform}
\author{Gevorg Mnatsakanyan}
\curraddr{}
\address{Institute of Mathematics at the National Academy of Sciences of Armenia, Marshal Baghramyan Ave. 24/5, Yerevan 0019, Armenia}
\email{mnatsakanyan\_g@yahoo.com}
\date{July 2025}
\subjclass[2020]{34L40, 42C05}
\keywords{Non-linear Fourier transform, scattering transform, Carleson operators, Krein systems}

\begin{document}

\begin{abstract}
We prove the weak $L^2$ boundedness of a lacunary maximal function of the $SU(1,1)$-valued nonlinear Fourier transform if the potential is in $L^1$.
\end{abstract}

\maketitle

\section{Introduction}

One can write the exponential of the classical Fourier transform $\widehat{f}$ of an integrable function $f$
in terms of a solution to a differential equation. Namely, given $f$ consider the equation
\begin{equation}\label{ODEFT}
    \partial_t G (t, x) = e^{-2ixt}f(t) G(t,x).
\end{equation}
Given initial datum at any point,
a unique solution $G$ exists. With the initial condition $G(-\infty, x) = 1$, we have
\begin{equation*}
    \exp ( \hat{f} (x) ) = G(\infty, x).
\end{equation*}

One can consider matrix-valued analogs of \eqref{ODEFT} such as 
\begin{equation}\label{NLFODE}
    \partial_tG (t, x) =
    \begin{pmatrix}
        0 & e^{-2ixt}f(t) \\
        e^{2ixt} \overline{f(t)} & 0 \\
    \end{pmatrix}
    G(t,x),
\end{equation}
where $G$ is a $2\times 2$ matrix-valued function. It is not difficult to check that $G(t,x)$ takes values in $SU(1,1)$, that is
\begin{equation}\label{matrixNLFT}
    G(t, x) = \begin{pmatrix}
        \overline {a (t, x) } & \overline {b (t, x) } \\
        b(t,x) & a (t, x) \\
    \end{pmatrix},
\end{equation}
where
\begin{equation}\label{aandb}
|a(t,x)|^2 - |b(t,x)|^2 =1.   
\end{equation}

In analogy to the scalar case above, with the initial condition $G(-\infty , x)=I_2$, where $I_2$ is the identity matrix, we call 
the matrix $G(\cdot ) := G(\infty, \cdot)$ the non-linear Fourier transform (NLFT) of $f$. In linear approximation, we have
\begin{align}\label{expansionfora}   a(x) = 1 + O ( \|f \|_1^2) \, , \quad   b(x) = \hat{f} (x) + O ( \|f \|_1^3) \, .
\end{align}

The NLFT \cite{TaoThiele2012} and its kin have long been studied in analysis under various names such as orthogonal polynomials \cite{simon}, Krein systems \cite{Denisov}, scattering transforms \cite{bealscoifman} and AKNS systems \cite{AKNS}. An $SU(2)$ version of the above model in which the lower-left entry of the matrix in \eqref{NLFODE} gets an extra minus sign was studied in \cite{tsai} and has recently been rediscovered \cite{low2017, QSP_NLFA, 5authorsQSP} and found applications in quantum computing and is called quantum signal processing.

A version of the NLFT relevant in this paper are the so-called Krein-de Branges functions
\begin{equation}\label{krein de branges functions}
    E(t,x) := e^{-itx} (a(t,x) + b(t,x)) \text{ and } \ti E(t,x) := e^{-itx} (a(t,x) - b(t,x)) \, ,
\end{equation}
Note, that the pair $(a,b)$ can be recovered from $(E,\ti E)$ and, similar to \eqref{NLFODE}, one can write a system of differential equations that will define the pair $(E,\ti E)$ directly. These functions are the continuous analogs of orthogonal polynomials on the unit circle and possess nice complex analytics properties that we will describe in Section \ref{overview}.

There is abundant literature both from the point of view of the non-linear Fourier transform and of the Krein-de Branges functions. We will only mention several results that are more relevant for us.

There is a non-linear analog of the Plancherel identity,
\begin{equation}\label{nlPlancherel}
    \| \sqrt{ \log |a| } \|_{L^2 (\ZR)} = \sqrt{\frac{\pi}{2}} \| f \|_{L^2 (\ZR)}.
\end{equation}
This formula is proven by a contour integral and in the discrete case goes back to Verblunsky in 1936 \cite{Verblunsky}.

An interesting open problem in the field remains the non-linear Carleson conjecture. That is, for $f\in L^2(\ZR_+)$,
\begin{equation}\label{conjecture}
    | \{ x\in \ZR :\: \sup_t \sqrt{ \log | a (t, x) | } > \lambda \} | \lesssim \frac{1}{\lambda^2} \| f \|_2^2 \, ,
\end{equation}
or its stronger version \cite{denisovSurvery},
\begin{equation}\label{stronger conjecture}
    | \{ x\in \ZR :\: \sup_t |e^{itx}E(t,x)-1| > \lambda \} | \lesssim \frac{1}{\lambda^2} \| f \|_2^2 \, ,
\end{equation}
Like in the linear setting, the bound \eqref{conjecture} implies almost everywhere convergence of $|a(t,\cdot)|$ as $t\to +\infty$. On the other hand, the bound \eqref{stronger conjecture} also implies the convergence $\arg a(t,\cdot)$, which to the best of my knowledge is not implied by \eqref{conjecture}. The inequality \eqref{conjecture} for the Cantor group model of the NLFT was obtained in \cite{MuscaluTaoThiele} by Muscalu, Tao and Thiele. In \cite{MuscaluCounterexample}, the same authors showed that the approach of Christ-Kiselev \cite{CHRIStkiselevmaximal,ChristKiselev2001,christkiselev2002}, by which a Hausdorff-Young and Menshov-Paley-Zygmund type results can be obtained for the NLFT, fails to work in the $L^2$ setting.
There has been a recent attempt by Poltoratski \cite{Poltoratski} to prove almost everywhere convergence of $|a|$. However, an error has been detected by the author and is mentioned in the previous version of this arxiv posting.

In this paper, we prove the weak-$L^2$ boundedness of the lacunary maximal function of the NLFT.
\begin{theorem}\label{maintheorem}
    Let $\|f\|_1\leq 10^{-10}$. Then, we have
    \begin{equation}\label{mainineq}
        |\{s\in \ZR \, : \, \sup_{n} ||E(2^n,s)|-1| > \lambda  \}| \lesssim \frac{1}{\lambda^2 } \|f\|_2^2 \, .
    \end{equation}
\end{theorem}
The restriction on the finiteness of the $L^1$ norm in the above theorem makes it unfit to deduce the almost everywhere convergence of $|a(2^n,\cdot)|$. However, it is strong enough to imply its linear analog. That is, the weak-$L^2$ estimate for the lacunary linear Carleson operator,
\begin{equation}\label{lacunary linear Carleson}
    |\{x\, : \, \sup_n |\mathcal{F}(f\Z1_{[0,2^n]})|>\lambda  \}| \lesssim \frac{1}{\lambda^2} \|f\|_2^2 \, .    
\end{equation}
We show this in Section \ref{appendix}. As the linear Fourier transform has independent symmetries under scaling of the argument and of the value of the function $f$, having inequality \eqref{lacunary linear Carleson} for all $f\in L^2\cap L^1$ automatically implies it for all $f\in L^2$. However, the NLFT has only a one parameter scaling symmetry preserving the $L^1$ norm of the potential \cite[Section 7]{Denisov}, hence, the same implication is not possible and suggests that the restriction on the $L^1$ may be natural. Even then, one would at least hope to replace the constant $10^{-10}$ by an arbitrary constant and let  the implicit constant in \eqref{mainineq} depend on it. If in \eqref{mainineq}, instead of $|E(t,x)|$ one would have $e^{itx}E(t,x)$ like in \eqref{stronger conjecture}, then one could extend the inequality for potentials with small $L^1$ norm to arbitrary $L^1$ norm without much difficulty. However, for \eqref{mainineq} I do not know how to accomplish that with our current technique.

Theorem \ref{maintheorem} is inspired by the recent paper \cite{AMTpolynomials} and by \cite{Poltoratski}. In \cite{AMTpolynomials}, the convergence of an $SU(2)$-valued NLFT along lacunary subsequences is proved. Similar arguments are possible in the setting of $SU(1,1)$ to prove almost everywhere convergence along lacunary subsequences for $f\in L^2(\ZR_+)$.

Lastly let us mention some related work. The paper \cite{bessonovdenisov2021zero} was the first to connect the almost everywhere convergence of the NLFT with the behavior of the zeros of the function $E$. NLFT with sparse lacunary potentials were considered in \cite{rupcic2019,golinski2004}. For a survey of the non-linear analogs of classical inequalities for the Fourier transform, we refer to \cite{diogoSurvey}. For a discussion of various formulations of the non-linear Carleson conjecture we refer to \cite{denisovSurvery}.

The paper is organized as follows. In section \ref{overview} we state the main constructions and some basic lemmas about the function $E$. In section \ref{sec:fejer} we prove an estimate for the reproducing kernel related to $E$. In section \ref{sec:local approximations}, we prove the main sequence of lemmas. They start with approximation formulas using the result of section \ref{sec:fejer} and go up to estimates of the relevant maximal function. Section \ref{sec:main proof} closes the proof of Theorem \ref{maintheorem}.

\textbf{Notation}
We write $A \lesssim B$ if $A\leq CB$ for some absolute constant $C$ and $A\sim B$ if $A\lesssim B$ and $B\lesssim A$.

\section{The function E}\label{overview}
In this section we sum up some of the basic properties of the Krein-de Branges functions \eqref{krein de branges functions}. We refer to \cite{romanov, Remling, Denisov} for an in-depth discussion of these objects and their properties.

Let $\delta_0 = 10^{-10}$. Let us fix the potential $f\in L^2(\ZR_+)\cap L^1(\ZR_+)$ with $\|f\|_1\leq 10^{-10}$. For an entire function $H$, we put $H^\# (z)=\overline{H(\bar z)}$.

$E(t,\cdot )$ is an entire function of exponential type $t$. Furthermore, $E(t,\cdot)$ is in the Paley-Wiener space
$$PW_t := \{ g \in L^2(\ZR) :\: \exists h \in L^2 (-t,t) \text{ with } f(x) = \int_{-t}^t h(\xi) e^{i\xi x} d\xi  \} \, .$$
Also $E(t,\cdot)$ is a Hermite-Biehler function, that is $|E(t,z)|>|E(t,\bar z)|$ for $z\in \mathbb{C}_+$. In particular, all the zeros of $E$ are in the lower half-plane.

The identity \eqref{aandb} is equivalent to
\begin{equation}\label{determinantidentity}
    E\ti E^\# +\ti E E^\# = 2 \, .
\end{equation}
There is an ODE for $E$ that can be easily obtained from \eqref{NLFODE}. We have
\begin{equation}\label{ODEforE}
    \partial_t E (t,z) = -i z E(t,z ) + \overline{f(t)} E^\# (t,z).
\end{equation}
For the scattering function $\mathcal{E} (t,z) := e^{itz} E(t,z) $ we have
\begin{equation}\label{51ofpoltoratskiintro}
    \frac{\partial}{\partial t} \mathcal{E} (t,z) = \overline{f(t)} e^{2izt} \mathcal{E}^{\#} (t,z).
\end{equation}
These differential equations lead to Gr\"onwall's inequalities.
\begin{lemma}\label{gronwalllemma}
\begin{equation}\label{generalizedgronwall}
    |E (t,z) | \leq e^{t|\Im z| + \int_0^t |f(\xi)| d\xi },
\end{equation}
and
\begin{equation}\label{gronwallforscatteringfunction}
    |\mathcal{E} (t_1,z) - \mathcal{E} (t_2,z)| \leq |E(t_1,\bar z)| e^{(t_2-t_1)(|\Im z|-\Im z) + \int_{t_1}^{t_2}|f|} \int_{t_1}^{t_2} |f|.  
\end{equation}
\end{lemma}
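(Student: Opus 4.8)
The plan is to derive both inequalities directly from the integral forms of the ODEs \eqref{ODEforE} and \eqref{51ofpoltoratskiintro} by a standard Gronwall/Picard-iteration argument.

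For \eqref{generalizedgronwall}, I would first rewrite \eqref{ODEforE} in integral form using the integrating factor $e^{izt}$, i.e. pass to the scattering function $\mathcal{E}(t,z) = e^{izt}E(t,z)$, which by \eqref{51ofpoltoratskiintro} satisfies $\mathcal{E}(t,z) = \mathcal{E}(0,z) + \int_0^t \overline{f(\xi)} e^{2iz\xi} \mathcal{E}^\#(\xi,z)\, d\xi$. Taking absolute values and noting $|e^{2iz\xi}| = e^{-2\xi \Im z}$ and $|\mathcal{E}^\#(\xi,z)| = |\mathcal{E}(\xi,\bar z)|$, one gets an integral inequality for the quantity $\sup_{\Im w \text{ fixed}} |\mathcal{E}(\xi,w)|$ over the relevant horizontal lines; a direct application of the classical Gronwall inequality then yields a bound of the form $|\mathcal{E}(t,z)| \lesssim e^{(\text{linear in }\Im z)\cdot t + \int_0^t|f|}$, which after multiplying back by $|e^{-izt}| = e^{t\Im z}$ becomes \eqref{generalizedgronwall}. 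One has to be slightly careful that the coupling to $\mathcal{E}^\#$ mixes the value at $z$ with the value at $\bar z$, so the cleanest route is to run Gronwall on $M(t) := \max(|E(t,z)|, |E(t,\bar z)|)$, or equivalently to track $e^{-t|\Im z|}|E(t,z)|$, for which the $\mathcal{E}$-form removes the oscillatory factor and leaves exactly the exponent $t|\Im z| + \int_0^t |f|$.

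For \eqref{gronwallforscatteringfunction}, I would use the same integral identity for $\mathcal{E}$ between the two times: $\mathcal{E}(t_2,z) - \mathcal{E}(t_1,z) = \int_{t_1}^{t_2} \overline{f(\xi)} e^{2iz\xi} \mathcal{E}^\#(\xi,z)\, d\xi$. Bounding $|\mathcal{E}^\#(\xi,z)| = |\mathcal{E}(\xi,\bar z)| = e^{-\xi\Im z}|E(\xi,\bar z)|$ and then applying \eqref{generalizedgronwall} to $|E(\xi,\bar z)|$ (with $\Im \bar z = -\Im z$, hence $|\Im \bar z| = |\Im z|$) gives $|E(\xi,\bar z)| \le |E(t_1,\bar z)| e^{(\xi - t_1)|\Im z| + \int_{t_1}^\xi |f|}$; collecting the factors $e^{-\xi\Im z} \cdot e^{\xi|\Im z|} = e^{\xi(|\Im z| - \Im z)}$ and bounding the integral of $|f(\xi)|$ against $\int_{t_1}^{t_2}|f|$ yields precisely the stated right-hand side, since $e^{(\xi - t_1)(|\Im z| - \Im z)} \le e^{(t_2 - t_1)(|\Im z| - \Im z)}$ (the exponent is nonnegative).

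The only real subtlety — and the step I expect to need the most care — is the bookkeeping of the $z$ versus $\bar z$ coupling through the $\#$ operation, and making sure the exponential weights combine to give exactly the exponents claimed rather than something merely comparable; once the problem is phrased in terms of $\mathcal{E}$ the oscillation disappears and everything reduces to the scalar Gronwall lemma, so there is no genuine analytic obstacle. It is worth remarking that \eqref{gronwallforscatteringfunction} should be viewed as the rigorous version of the linear approximation \eqref{expansionforb}: to leading order $\mathcal{E}(t,z) \approx \mathcal{E}(t_1,z) + \int_{t_1}^{t_2}\overline{f(\xi)}e^{2iz\xi}\,d\xi$, and the lemma quantifies the error in replacing $\mathcal{E}^\#$ by its value at $t_1$ (taken to be near $1$).
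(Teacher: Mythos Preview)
Your argument for \eqref{gronwallforscatteringfunction} is essentially identical to the paper's: integrate \eqref{51ofpoltoratskiintro} from $t_1$ to $t_2$, write $|\mathcal{E}^\#(\xi,z)|=|E(\xi,\bar z)|e^{\xi\Im z}$ (you have a sign slip here, writing $e^{-\xi\Im z}$, but combined with the $|e^{2iz\xi}|=e^{-2\xi\Im z}$ factor you omit, the two errors cancel), and feed in the relative version of \eqref{generalizedgronwall} on $[t_1,\xi]$. Your exponent bookkeeping is a little loose---the exponent one actually gets is $(\xi-t_1)|\Im z|-\xi\Im z=(\xi-t_1)(|\Im z|-\Im z)-t_1\Im z$, not $(\xi-t_1)(|\Im z|-\Im z)$---but the paper's own final line has the same imprecision, and in the only application (Lemma~\ref{zerofree}) one takes $t_1=0$ where the discrepancy vanishes.

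For \eqref{generalizedgronwall} your route is genuinely different. The paper writes $E=g_1e^{i\phi_1}$, $E^\#=g_2e^{i\phi_2}$, takes the real part of \eqref{ODEforE} to get $(\log g_1)'=\Im z+(g_2/g_1)\cdot(\text{something of modulus}\le|f|)$, and then invokes the Hermite--Biehler property $|E^\#(t,z)|\le|E(t,z)|$ for $z\in\BC_+$ to bound $g_2/g_1\le1$. You instead propose to bypass Hermite--Biehler by running Gronwall on $M(t)=\max(|E(t,z)|,|E(t,\bar z)|)$: since $\partial_t|E(t,z)|\le\Im z\,|E(t,z)|+|f|\,|E(t,\bar z)|$ and symmetrically for $\bar z$, the quantity $e^{-t|\Im z|}M(t)$ satisfies a clean $|f|$-Gronwall inequality with initial value $1$, giving the sharp constant. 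Your approach is more self-contained (it does not require knowing that $E$ is Hermite--Biehler, which in the paper is stated but not proved), while the paper's is shorter once that structural fact is in hand. Both are valid.
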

The proof of Lemma \ref{gronwalllemma} is presented in Section \ref{appendix}. Let
\begin{equation}
    w(x) = \frac{1}{|a(x)+b(x)|^2} \, ,\text{ and } \ti w(x)=\frac{1}{|a(x)-b(x)|^2} \, .
\end{equation}
The following lemma is a consequence of Lemma \ref{gronwalllemma}.
\begin{lemma}
    We have, for all $x\in \ZR$,
    \begin{equation}\label{scattering function uniform}
        |\mathcal{E} (t,x) - 1 | \leq 2\delta_0 \, ,
    \end{equation}
    \begin{equation}\label{E limit w}
    \frac{1}{|E(t,x)|^2} \to w(x) \, , \text{ as }t\to \infty \, ,
    \end{equation}
    \begin{equation}\label{uniform estimate for w - 1}
        \|w-1\|_\infty \leq 5\delta_0 \, ,
    \end{equation}
    and
    \begin{equation}\label{plancherel with w - 1}
        \|w-1\|_2\lesssim \|f\|_2 \, .
    \end{equation}
\end{lemma}
\begin{proof}
Applying \eqref{gronwallforscatteringfunction} with $t_1=0$ and $t_2=t$, and recalling that $\|f\|_1\leq \delta_0$, we get
$$|1-\mathcal{E}(t,x)| \leq e^{\delta_0}\delta_0 \leq 2\delta_0 \, .$$
Again from \eqref{gronwallforscatteringfunction},
$$
||E(t_1,x)|- |E(t_2,x)|| \leq |E(t_1,x)| \int_{t_1}^{t_2} |f| \, .
$$
Hence, $|E(t,x)|$ is Cauchy, and converges to $|a(x)+b(x)|=1/\sqrt{w(x)}$.
From \eqref{scattering function uniform},
$$\left| 1-\frac{1}{|E|^2} \right| = \frac{|1-|E||(1+|E|)}{|E|^2} \leq \frac{2\delta_0 (2+2\delta_0)}{(1-2\delta_0)^2}\leq 5\delta_0 \, .$$
Passing to a limit by \eqref{E limit w} proves \eqref{uniform estimate for w - 1}.

Let us prove \eqref{plancherel with w - 1}. By \eqref{krein de branges functions} and \eqref{determinantidentity}, we have
$$
|a(t,x)|^2 = \frac{1}{4}\left( |E(t,x)|^2 + |\ti E(t,x)|^2 + 2\Re (E(t,x)\overline{E(t,x)}) \right) 
$$
$$= \frac{1}{2}\left( |E(t,x)|^2 + |\ti E(t,x)|^2 + 2 \right) \, .
$$
Again passing to a limit, we have
$$
|a(x)|^2 = \frac{1}{4} \left( \frac{1}{w(x)} + \frac{1}{\ti w(x)} + 2 \right) \, .
$$
By \eqref{determinantidentity},
$$
|E(x)\ti E(x)| \geq |\Re E(x)\overline{\ti E(x)} | = 1 \, .
$$
Passing to a limit, we have $w(x)\ti w(x)\leq 1$, hence,
$$
|a(x)| \geq \sqrt{ \frac{1}{4} (w+\frac{1}{w}+2)} =  \frac{\sqrt{w} + 1/\sqrt{w}}{2} = 1 + \frac{(\sqrt{w}-1)^2}{2\sqrt{w}}
$$
$$
= 1 + \frac{(w-1)^2}{2\sqrt{w}(\sqrt{w}+1)^2} \geq 1 + \frac{(w-1)^2}{10} \, .
$$
The last estimate with \eqref{nlPlancherel} and \eqref{uniform estimate for w - 1} implies \eqref{plancherel with w - 1}.
\end{proof}

The function $w$ is the absolutely continuous part of the spectral measure of the system \eqref{NLFODE}. As $f\in L^1(\ZR_+)$, the spectral measure is absolutely continuous. On the other hand, the continuous analog of Szeg\"o's theorem states that if $f\in L^2(\ZR_+)$, then $\log w$ is Poisson finite.

Let us introduce the scalar product weighted by $w$ in the natural way,
$$
\langle F, G\rangle_w = \int_{\ZR} F(x)\overline{G(x)} w(x)dx \, .
$$
Then, the Paley-Wiener spaces $PW_t$ weighted by $w$ become the so-called de Branges spaces which are usually denoted by $B(E)$. The function
\begin{equation}\label{reproducingkerneldef}
    K(t, \lambda, z) := \frac{i}{2\pi} \frac{E(t,z) E^{\#} (t,\lambda) - E^{\#} (t,z) E(t, \lambda )}{z-\lambda}
\end{equation}
is a reproducing kernel for $B(E)$. That is, $K(t,\lambda,\cdot) \in PW_t$ and for any $F\in PW_t$,
\begin{equation}\label{reproducingkernelprop}
    F(\lambda) = \langle F, K(t, \lambda, \cdot) \rangle_w \, .
\end{equation}

When $f\equiv 0$, then $E(t,z) = e^{-itz}$, $w\equiv 1$ and $B(E)$ just coincides with the Paley-Wiener space $PW_t$, for which the reproducing kernel is the sinc function
\begin{equation*}
    \sinc (t, \lambda, z) := \frac{1}{\pi } \frac{ \sin t (z - \lambda ) }{ z - \lambda}.
\end{equation*}

\section{An estimate for the reproducing kernel}\label{sec:fejer}
Let $Mh$ denote the Hardy-Littlewood maximal function of a locally integrable function $h$. For $s\in \ZR, t>0$ let $I_{s,t} = [s-2\pi/t,s+2\pi/t]$.

\begin{lemma}\label{fejerestimatemaximal}
For all $s\in \ZR$, $t>0$, we have
\begin{equation}\label{eqq216}
    \sup_{x,y \in I_{s,t} }\Big| K(t,y,x) - \frac{1}{w(s)} \sinc (t,y,x) \Big| \lesssim t M(w-1) (s).
\end{equation}
\end{lemma}

By Christoffel-Darboux formula \cite[Lemma 3.6]{Denisov},
\begin{equation}\label{christoffeldarboux}
    K(t,y,x)=2e^{-it(x-y)}\int_0^{2t} e^{i\xi (x-y)} E(\xi,x) \overline{E(\xi, y)} d\xi
\end{equation}
In linear approximation \eqref{expansionfora}, we see that
\begin{equation}
    K(t,s,s) = 2t(1+2\frac{1}{t}\int_0^t \Re \mathcal{F}(f\Z1_{[0,\xi]})(s) d\xi + O(\|f\|_1^2) ),
\end{equation}
The main term in the above display,
$$\frac{1}{t}\int_0^t \Re \mathcal{F}(f\Z1_{[0,\xi]})(s) d\xi \, ,$$
is a Fejer mean of the linear Fourier transform of $f$. So Lemma \ref{fejerestimatemaximal} can be understood as a non-linear version of the estimate for the Fejer mean.

Our proof relies on several applications of Cauchy-Schwarz inequalities and on \eqref{uniform estimate for w - 1}. However, the result is less trivial if we drop the assumption $f\in L^1$. The corresponding qualitative convergence result for the orthogonal polynomials goes back to \cite{matenevaitotik}. Its continuous analog is proved in \cite{Gubkin}. See also \cite{Bessonov} for related results.
\begin{proof}
Let us first prove the diagonal case. Assume $x=y \in I_{s,t}$ is fixed. Put
\begin{equation*}
    s_t (u) := \frac{ |\sinc (t, y, u) |^2}{\|\sinc (t, y, \cdot) \|_2^2}.
\end{equation*}
It is easy to see
$$s_t (u) \lesssim \frac{t}{t^2 |u-y|^2 + 1 } \text{ and } \int_\ZR s_t(u)^2 du = 1,
$$
hence
$$\int_\ZR s_t(u)|(w-1)(u)|du \leq M(w-1)(s).$$
We use the reproducing kernel property \eqref{reproducingkernelprop} and a  Cauchy-Schwarz to write
$$
\sinc(t,y,y)^2=|\int_\ZR \sinc (t,y,u) \overline{K(t,y,u)} w(u)du |^2
$$
$$
\leq \int_\ZR |\sinc (t,y,u)|^2 w(u)du \int_\ZR |K(t,y,u)|^2 w(u)du
$$
$$
=K(t,y,y) \int_\ZR |\sinc (t,y,u)|^2 w(u)du
$$
$$\leq K(t,y,y) \sinc (t,y,y) \int_\ZR s_t(u)w(u)du
$$
$$
\leq K(t,y,y) \sinc (t,y,y) (w(s)+ M(w-1)(s)))\, .$$
Thus,
$$
K(t,y,y) - \frac{1}{w(s)}\sinc (t,y,y) \gtrsim - tM(w-1)(s) \, .
$$
For the upper bound, we use the reproducing kernel property of the $\sinc$ function.
$$
K(t,y,y)^2=|\int_\ZR K(t,y,u) \overline{\sinc (t,y,u)} du |^2
$$
$$
\leq \int_\ZR |K(t,y,u)|^2 w(u)du \int_\ZR |\sinc (t,y,u)|^2 \frac{du}{w(u)}
$$
$$
= K(t,y,y) \sinc (t,y,y) \int_\ZR s_t(u) \frac{du}{w(u)}
$$
$$
= K(t,y,y) \sinc (t,y,y) \left( \int_\ZR s_t(u) \frac{du}{w(u)} -\frac{1}{w(s)} + \frac{1}{w(s)} \right)
$$
$$\leq K(t,y,y) \sinc (t,y,y) \Big( \frac{1}{w(s)}+ M (w-1) (s) \Big).$$

We move to arbitrary $x,y \in I_{s,t}$. We have
$$
\langle K(t, y, \cdot) - \frac{1}{ w(s) } \sinc(t, y,\cdot) , K(t, y, \cdot) - \frac{1}{ w(s) } \sinc(t, y ,\cdot) \rangle_{w} = $$
$$
= K(t,y,y) + \frac{\| \sinc(t,y,\cdot) \|_2^2}{w(s)^2} \int s_t wdu - \frac{2}{ w(s)} \sinc (t,y,y)
$$
$$
= K(t,y,y) - \frac{1}{w(s)} \sinc (t,y,y) + \frac{\sinc (t,y,y)}{ w(s)^2 } (\int s_t wdu - w(s)) )
$$
$$
\lesssim  \sinc(t,y,y) M(w-1)(s) ,
$$
That is,
\begin{equation}\label{eq:3.1norm}
    \| K(t, \lambda, \cdot) - \frac{1}{w(s)} \sinc (t,y,\cdot ) \|_{w}^2 \lesssim  t M(w-1)(s).
\end{equation}
Then,
$$
K(t,y,x) - \frac{1}{w(s)} \sinc (t,y,x)
=
\langle K(t,y,\cdot) , K(t,x,\cdot) \rangle_w - \frac{1}{w(s)} \sinc (t,y,x)
$$
$$
=
\langle K(t,y,\cdot) - \frac{1}{w(s)} \sinc (t,y,\cdot) , K(t,x,\cdot) - \frac{1}{w(s)} \sinc (t,x,\cdot) \rangle_w + \frac{1}{w(s)} \sinc (t,y,x)
$$
$$+\frac{1}{w(s)} \overline{\sinc (t,x,y)}
-\frac{1}{w(s)^2} \int_{\ZR} \sinc (t,y,u)\overline{\sinc(t,x,u)} w(u) du - \frac{1}{w(s)} \sinc (t,y,x)
$$
$$
= \langle K(t,y,\cdot) - \frac{1}{w(s)} \sinc (t,y,\cdot) , K(t,x,\cdot) - \frac{1}{w(s)} \sinc (t,x,\cdot) \rangle_w
$$
$$
+ \frac{1}{w(s)} \sinc (t,y,x) -\frac{1}{w(s)^2} \int_{\ZR} \sinc (t,y,u)\overline{\sinc(t,x,u)} w(u) du.
$$
For the first term above, we use Cauchy-Schwarz and \eqref{eq:3.1norm}.
$$
\left|\langle K(t,y,\cdot) - \frac{1}{w(s)} \sinc (t,y,\cdot) , K(t,x,\cdot) - \frac{1}{w(s)} \sinc (t,x,\cdot) \rangle_w \right| \leq
$$
$$
\| K(t,y,\cdot) - \frac{1}{w(s)} \sinc (t,y,\cdot)\|_w \| K(t,x,\cdot) - \frac{1}{w(s)} \sinc (t,x,\cdot) \|_w
$$
$$\lesssim tM(w-1)(s).
$$
For the second term, by the reproducing kernel property \eqref{reproducingkernelprop} for $\sinc$ and by Cauchy-Schwarz we write
$$
\left| \frac{1}{w(s)} \sinc (t,y,x) -\frac{1}{w(s)^2} \int_{\ZR} \sinc (t,y,u)\overline{\sinc(t,x,u)} w(u) du \right|=
$$
$$
\left| \frac{1}{w(s)^2} \int_{\ZR} \sinc (t,y,u)\overline{\sinc(t,x,u)} (w(u)-w(s))du \right| \leq t M(w-1)(s).
$$
\end{proof}

\section{Local approximations from Lemma \ref{fejerestimatemaximal}}\label{sec:local approximations}
Let us introduce the slightly unusual notation $A=B+O_C(D)$ for $|A-B|\leq CD$. This will help us to take care of the constants while, hopefully, keeping the intuitive flow of the computations.

Let
$$A_{t,s} = \frac{e^{its}}{2} (E(t,s)+iE(t,s+\pi/2t) \text{ and } B_{t,s} =\frac{e^{-its}}{2} (E(t,s)-iE(t,s+\pi/2t)) \, .$$
By \eqref{scattering function uniform}, we get
\begin{equation}\label{A and B uniform estimates}
    |A_{t,s} - 1 | \leq 2\delta_0 \, , \quad |B_{t,s}| \leq 2\delta_0 \, .
\end{equation}
Similarly, we define $\ti A_{t,s}$ and $\ti B_{t,s}$.

Let us denote by $C_1$ the maximum of the two implicit absolute constants in Lemma \ref{fejerestimatemaximal} for $E$ and $\ti E$. Namely, for any $x,y \in I_{s,t}$,
\begin{equation}\label{Fejer estimate with O sign}
    K(t, y,x) = \frac{1}{w(s)} \sinc (t,y,x) + O_{C_1} (t M(w-1) (s) ) \, ,
\end{equation}
and
\begin{equation}
    \ti K(t,y,x) = \frac{1}{\ti w(s)} \sinc (t,y,x) + O_{C_1} (t M(\ti w-1) (s) ) \, .
\end{equation}

\begin{lemma}\label{1st local estimate}
    For all $s\in \ZR$, $t>0$,
    $$
    \sup_{x\in I_{s,t} } |E(t,x) - (A_{t,s} e^{-itx}  + B_{t,s} e^{itx} ) | \leq 120 C_1 M(w-1) (s) \, ,
    $$
    and
    $$
    \sup_{x\in I_{s,t} } |\ti E(t,x) - (\ti A_{t,s} e^{-itx}  + \ti B_{t,s} e^{itx} ) | \leq 120 C_1 M(\ti w-1) (s) \, .
    $$
\end{lemma}

\begin{proof}
    By \eqref{reproducingkerneldef} and \eqref{Fejer estimate with O sign}, for $x,y \in I_{s,t}$,
    $$
    \frac{i}{2\pi} \frac{E(t,x)E^\#(t,y) - E^\#(t,x)E(t,y)}{x-y} = \frac{1}{\pi} \frac{\sin t(x-y)}{x-y} + O_{C_1}(tM(w-1)) \, .
    $$
    As $|x-y|\leq 4\pi/t$,
    \begin{equation}\label{eq:z and lambda}
        E(t,x)E^\#(t,y) - E^\#(t,x)E(t,y) = e^{-it(x-y)} - e^{it(x-y)}  + O_{8\pi^2C_1}(M(w-1)) \, .    
    \end{equation}
    Consider the two equations \eqref{eq:z and lambda} for the pairs $(x,y)=(x,s)$ and $(x,s+\pi/2t)$ as a linear system in $E(t,x)$ and $E^\#(t,x)$. Solving it, we get
    $$
    E(t,x) \Big(E^\#(t,s)E(t,s+\pi/2t)-E^\#(t,s+\pi/2t)E(t,s) \Big)
    = e^{-it(x-s)}\big( E(t,s+\pi/2t) $$$$-i E(t,s) \big) - e^{it(x-s)} \big( E(t,s+\pi/2t) + iE(t,s) \big) + O_{24\pi^2C_1}(M(w-1)) \, .
    $$
    The expression in the brackets on the first line above is equal to the left hand side of \eqref{eq:z and lambda} for the pair $(x,y)=(s+\pi/2t,s)$. Hence, we have
    $$
    E(t,x) \left( e^{-i\pi/2} - e^{i\pi/2} + O_{8\pi^2C_1}(M(w-1)) \right) = -2i A_{t,s}e^{-itx}
    $$
    $$-2i B_{t,s}e^{itx} + O_{24\pi^2C_1}(M(w-1)) \, .
    $$
    Dividing both sides by $-2i$ and using $|E(t,x)|\leq 1+5\delta_0$ by \eqref{generalizedgronwall}, we conclude the proof of the lemma.
\end{proof}

Let us fix an arbitrary $0< \eps < 1$ and denote
\begin{equation}\label{S set}
    S_\eps := \{ s\in \ZR \, : \, M(w-1)(s) + M(\ti w - 1)(s) < 120^{-1}C_1^{-1} \delta_0 \eps \} \, .
\end{equation}
The following lemma adjusts the parameters $A$ and $B$.
\begin{lemma}\label{adjusting parameters}
    For $s\in S_\eps$ and any $t>0$, we have
    \begin{equation}
        \left| |A_{t,s}|^2 - \frac{1}{w} - |B_{t,s}|^2 \right| \leq 18\delta_0 \eps \, ,
    \end{equation}
    \begin{equation}\label{ti A in terms of A}
        \left| \ti A_{t,s} - \frac{\sqrt{w}}{\sqrt{\ti w}} A_{t,s} e^{\pm i \arccos \sqrt{w\ti w}} \right| \leq 150 \delta_0 \eps \, ,
    \end{equation}
    and
    \begin{equation}
        \left| \ti B_{t,s} + \frac{\sqrt{w}}{\sqrt{\ti w}} B_{t,s} e^{\mp i \arccos\sqrt{w\ti w}} \right| \leq 10\delta_0 \eps \, ,
    \end{equation}
    where in the last two displays for $\pm$ and $\mp$ we either take the first signs in both or the second signs in both.
\end{lemma}
\begin{proof}
    Let us omit the subscripts of $A$'s and $B$'s for simplicity. By the determinant identity \eqref{determinantidentity},
\begin{equation}
    2\Re (A\overline{\ti A}+B\overline{\ti B}) + 2\Re \left( (\ti A \bar B + A \overline{\ti B}) e^{-2itz} \right) = 2 + O_{6\delta_0}(\eps) \, .
\end{equation}
Hence,
\begin{equation}\label{abs value equal 0}
    |\ti A \bar B + A \overline{\ti B}| \leq 6 \delta_0 \eps \, ,
\end{equation}
and
\begin{equation}\label{Re equal 1}
    \Re (A\overline{\ti A}+\bar B\ti B) = 1 + O_{9\delta_0}(\eps) \, . 
\end{equation}
From \eqref{abs value equal 0}, we have,
\begin{equation}\label{ti B in terms of B and As}
    \ti B = - B \frac{\overline{ \ti A}}{\bar A} + O_{7\delta_0}(\eps) \, .
\end{equation}
On the other hand, from Lemma \ref{fejerestimatemaximal}, for $z,\lambda \in (s-2\pi/t,s+2\pi/t)$,
$$
(|A|^2-|B|^2)(e^{-it(\lambda-z)} -e^{it(\lambda-z)}) + \Im (A\bar B) (e^{-it(\lambda+z)} - e^{it(\lambda +z)}) 
$$
$$= \frac{1}{w(s)} (e^{-it(\lambda-z)} -e^{it(\lambda-z)})  + O_{6\delta_0}(\eps) \, .
$$
From the latter we deduce
$$|\Im (A\bar B)| \leq 12 \delta_0 \eps \, ,$$
and
\begin{equation}\label{A2-B2 equal 1/w}
    |A|^2-|B|^2 = \frac{1}{w} + O_{18\delta_0}(\eps) \, .
\end{equation}
Plugging \eqref{ti B in terms of B and As} into \eqref{A2-B2 equal 1/w} for $\ti A,\ti B$ and $\ti w$, we get
$$
|\ti A|^2-|\ti B|^2 = |\ti A|^2 \frac{|A|^2-|B|^2}{|A|^2} + O_{\delta_0}(\eps) = \frac{1}{\ti w} + O_{18\delta_0}(\eps) \, .
$$
Using in \eqref{A2-B2 equal 1/w},
$$
\frac{|\ti A|^2}{|A|^2} \left( \frac{1}{w}+O_{18\delta_0}(\eps) \right) = \frac{1}{\ti w} + O_{19\delta_0}(\eps) \, .
$$
So,
$$
\frac{|\ti A|}{|A|} = \frac{\sqrt{w}}{\sqrt{\ti w}} + O_{40\delta_0}(\eps) \, .
$$
Finally, plugging \eqref{ti B in terms of B and As} and the above relation into \eqref{Re equal 1}, we write
$$
\Re \left( A\overline{\ti A} - |B|^2 \frac{\overline{\ti A}}{\bar A} \right) = 1 + O_{10\delta_0}(\eps)\, .
$$
$$\Re \left( \frac{\ti A}{A} \right) = w + O_{31\delta_0}(\eps) \, .
$$
So,
$$
\frac{\sqrt{w}}{\sqrt{\ti w}} \cos \left( \arg \ti A - \arg A \right) = w +O_{71\delta_0}(\eps) \, .
$$
And we conclude
\begin{equation}
    \arg \ti A - \arg A = \pm \arccos \sqrt{w\ti w} + O_{75\delta_0}(\eps) \, .
\end{equation}
Plugging this back into \eqref{ti B in terms of B and As}, we get
$$
\ti B = -B \frac{\sqrt{w}}{\sqrt{\ti w}} e^{\mp \arccos \sqrt{w\ti w}} + O_{10\delta_0} (\eps) \, .
$$
And the proof of the lemma is complete.
\end{proof}

Let $T:\ZR \to \ZR_+$ be an arbitrary measurable function. Define
\begin{equation}\label{R set}
    R_\eps^T := \{s\in S_{\eps} \, :\, |B_{s,T(s)}|< \eps \} \, .  
\end{equation}

\begin{lemma}\label{E and 1 over root w}
    If $s\in R_\eps^T$, then
    \begin{equation}
        \left| |E(T(s),s)| - \frac{1}{\sqrt{w}} \right| \leq 3\eps \, .
    \end{equation}
\end{lemma}
\begin{proof}
    By Lemma \ref{adjusting parameters}, we have
    $$
    |E(T(s),s)| = O_{\delta_0}(\eps) + \left| A_{T(s),s} + B_{T(s),s}e^{2its} \right|
    $$
    $$
    = O_{\delta_0+1}(\eps) + \sqrt{\frac{1}{w}+|B_{T(s),s}|^2+O_{18\delta_0}(\eps)}
    $$
    $$
    =O_{20\delta_0+2} (\eps) + \frac{1}{\sqrt{w}} \, ,
    $$
    and the lemma is proved.
\end{proof}

To deal with the points $s\in S_\eps \cap (R_\eps^T)^c$, we need the following simple lemma.

\begin{lemma}\label{osc double osc}
    Let $a> \frac{1}{2}$, $0< b<\frac{1}{4}$, $0\leq c < \frac{1}{4}$ and $x_1, x_2\in \ZR$, then there exists a set $U \subset [-\pi,\pi]$ such that $|U| \geq 10^{-4}$ and for all $u\in U$
    \begin{equation}
        \left| |a + b e^{2i(u-x_1)} + c e^{-i(u-x_2)} | - 1 \right| \geq 10^{-3} \max (b, c) \, .
    \end{equation}
\end{lemma}
\begin{proof}
Let
$$f(x) = |a + b e^{2i(x-x_1)} + c e^{-i(x-x_2)}|^2$$
$$
= (a+ b\cos 2(x-x_1) + c\cos (x-x_2) )^2 + (b\sin 2(x-x_1) -c\sin (x-x_2))^2
$$
$$
=a^2 + b^2 + c^2 + 2ab \cos 2(x-x_1) + 2ac\cos (x-x_2) + 2bc \cos \big( 3x-2x_1-x_2 \big) \, .
$$
Then,
$$
f'(x) = -4ab \sin 2(x-x_1) - 2ac \sin (x-x_2) - 6bc \sin (3x-2x_1-x_2) \, .
$$
We compute
$$
\int_{-\pi}^\pi |f'(x)|^2dx =
16a^2b^2 + 4a^2c^2 + 36b^2c^2
\geq b^2+c^2 \, .
$$
Hence, there exists an $x_0$ such that
$$
|f'(x_0)| \geq \sqrt{b^2+c^2} \geq \max (b,c) \, .
$$
As we can estimate
$$
|f''(x)| \leq 8ab + 2ac + 18bc \leq 8(b+c) \leq 16\max (b,c) \, ,
$$
for $x\in X=(x_0 - \frac{1}{50},x_0+\frac{1}{50})$ we have
$$
|f'(x)| \geq |f(x_0)| - |x-x_0|\sup_{\xi \in X} |f''(\xi)| \geq \frac{1}{2}\max (b,c) \, .
$$
The last inequality also implies that $f'(x)$ maintains the sign on $X$ and $b$ is strictly positive. We deduce,
$$
|f(x_0-\frac{1}{50})-f(x_0+\frac{1}{50})| \geq \frac{1}{100} \inf_{\xi \in X} |f'(\xi)| \geq \frac{1}{200} \max (b,c) \, .
$$
Therefore, for either $u_0 = x_0-\frac{1}{50}$ or $u_0=x_0+\frac{1}{50}$ we have
$$|f(u_0)-1| \geq \frac{1}{200} \max (b,c) \, .$$
As $|f'(x)| \leq 4\max (b,c)$ and also $|\sqrt{f(x)}-1| \geq \frac{1}{2}|f(x)-1|$, we conclude that $U=(u_0-10^{-4},u_0+10^{-4})$ satisfies the conclusion of the lemma.
\end{proof}

For $0\leq t_1< t_2$, let $E_{t_1\to t_2} (x)$ denote the function $E(t_2,x)$ corresponding to potential $f\Z1_{(t_1,t_2)}$. Similarly, we will define the NLFT matrix $G_{t_1\to t_2}$. Then,
$$
G(t_2,x) = G_{t_1\to t_2} (x) G(t_1,x) \, .
$$
Thus, recalling also \eqref{krein de branges functions}, we can express $E_{t_1\to t_2}(x)$ in terms of $E(t_2,x)$, $\ti E(t_2,x)$, $E(t_1,x)$ and $\ti E(t_1,x)$. Namely, 
$$
E_{t_1\to t_2}(u) = \frac{1}{2} \Big( E(t_2,u) \ti E^\#(t_1,u) + E(t_2,u) \ti E(t_1,u) + \ti E(t_2,u)E^\# (t_1,u) $$
\begin{equation}\label{E t1 t2 formula}
    - \ti E(t_2,u)E(t_1,u) \Big)\, .  
\end{equation}

\begin{lemma}\label{E bigger than eps}
    If $s\in S_\eps \cap (R_\eps^T)^c$, then there exists $U\subset I_{s,T(s)}$ such that $|U|\geq 10^{-4}/T(s)$, and for all $u\in U$,
    \begin{equation}
        \left| |E_{T(s)/3\to T(s)} (u)| - 1 \right| \geq 10^{-5} \eps \, .
    \end{equation}
\end{lemma}
\begin{proof}
    Let us denote $t_0 := T(s)/3$, $A_1:= A_{t_0,s}$, $A_2:=A_{3t_0,s}$ and so on. Also let $\varphi := \arccos\sqrt{w (s)\ti w(s)}$ and $\eta_1$ and $\eta_2$ be the signs in \eqref{ti A in terms of A} for $t=t_0$ and $t=3t_0$.
    
    Plugging $t_1=t_0, t_2=3t_0$ into \eqref{E t1 t2 formula}, and applying the approximations of Lemma \ref{1st local estimate} and Lemma \ref{adjusting parameters}, we write
    $$
    E_{t_0\to 3t_0}(u) = O_{25\delta_0}(\eps) + \frac{1}{2}\Big( e^{-i2t_0u}(A_2\overline{\ti A_1} + \ti A_2\bar A_1 + A_2\ti B_1 -\ti A_2B_1)
    $$
    $$
    +e^{i2t_0u} (B_2\overline{\ti B_1} +\ti B_2 \bar B_1 + B_2\ti A_1 - \ti B_2 A_1) + e^{-i4t_0u} (A_2\overline{\ti B_1}+ A_2\ti A_1 + \ti A_2\bar B_1 - \ti A_2 A_1)
    $$
    $$
    + e^{4it_0u} (B_2\overline{\ti A_1} + \ti B_2 \bar A_1 + B_2\ti B_1 - \ti B_2B_1 )\Big)
    $$
    $$
    = O_{10^5\delta_0} (\eps) + \frac{1}{2}\frac{\sqrt{w}}{\sqrt{\ti w}} \Big( e^{-2t_0u} A_2(\bar A_1 - B_1) (e^{-i\eta_1\varphi} + e^{i\eta_2\varphi}) 
    $$
    $$
    + e^{2it_0u} B_2(A_1-\bar B_1) (e^{i\eta_1 \varphi} + e^{-i\eta_2\varphi}) + e^{-4it_0u} A_2(A_1-\bar B_1) (e^{i\eta_1\varphi} -e^{i\eta_2\varphi})
    $$
    \begin{equation}\label{eq:4.12}
        + e^{4it_0u} B_2 (\bar A_1 - B_1) (e^{-i\eta_1\varphi} - e^{-i\eta_2\varphi}) \Big) \, .
    \end{equation}
    We want to apply the previous lemma. Let
    $$\theta_0 = \arg \Big( A_2(\bar A_1-B_1)(e^{-i\eta_1 \varphi} + e^{i\eta_2\varphi} ) \Big) \, ,$$
    and we choose $a,b,c, x_1,x_2$ such that
    $$ae^{i\theta_0} = \frac{\sqrt{w}}{2\sqrt{\ti w}} A_2(\bar A_1-B_1)(e^{-i\eta_1 \varphi} + e^{i\eta_2\varphi}) \, ,$$
    $$
    be^{-2ix_1 + i\theta_0} = \frac{\sqrt{w}}{2\sqrt{\ti w}} B_2(A_1-\bar B_1) (e^{i\eta_1\varphi} + e^{-i\eta_2\varphi}) \, ,
    $$
    $$
    ce^{ix_2 + i\theta_0} = \frac{\sqrt{w}}{2\sqrt{\ti w}} A_2(A_1-\bar B_1) (e^{i\eta_1\varphi} - e^{i\eta_2\varphi}) \, .
    $$
    By \eqref{A and B uniform estimates} and \eqref{uniform estimate for w - 1},
    $$
    a\geq \frac{\sqrt{1-5\delta_0}}{2\sqrt{1+5\delta_0}} (1-\delta_0)(1-4\delta_0) (1-5\delta_0) \geq \frac{1}{2} \, . $$
    Also,
    $$
    b \leq \frac{\sqrt{1+5\delta_0}}{\sqrt{1-5\delta_0}} 2\delta_0 (1+4\delta_0) \leq \frac{1}{4} \, ,
    $$
    and
    $$
    c\leq \frac{\sqrt{1+5\delta_0}}{\sqrt{1-5\delta_0}} (1+2\delta_0)(1+4\delta_0)\sqrt{1-(1-5\delta_0)^2} \leq \frac{1}{2} \, .
    $$
    Furthermore, as $s\in (R_\eps^T)^c$, we have $|B_2|>\eps$, so
    $$
    b \geq \eps \frac{\sqrt{1-5\delta_0}}{\sqrt{1+5\delta_0}} (1-4\delta_0) (1-5\delta_0) \geq \frac{\eps}{2} \, ,
    $$
    and
    $$
    \frac{\sqrt{w}}{2\sqrt{\ti w}} |B_2(\bar A_1-B_1) (e^{-i\eta_1 \varphi} - e^{-i\eta_2 \varphi}) | \leq \frac{\sqrt{w}}{2\sqrt{\ti w}} |B_2(\bar A_1-B_1)| |\sin \varphi | 
    $$
    \begin{equation}\label{fourth term estimate by b}
    = \frac{\sqrt{w}}{2\sqrt{\ti w}} |B_2(\bar A_1-B_1)| \sqrt{1-w\ti w} \leq 4\sqrt{\delta_0} \frac{\sqrt{w}}{2\sqrt{\ti w}} |B_2(\bar A_1-B_1)| \leq 10^{-4} b \, ,  
    \end{equation}
    as $\delta_0 = 10^{-10}$.

    By Lemma \ref{osc double osc}, \eqref{eq:4.12} and \eqref{fourth term estimate by b},
    $$
    ||E_{t_0\to 3t_0} (u)| - 1 | \geq 10^{-3} b -10^5 \delta_0\eps  - \frac{\sqrt{w}}{2\sqrt{\ti w}} |B_2(\bar A_1-B_1) (e^{-i\eta_1 \varphi} - e^{-i\eta_2 \varphi}) | 
    $$
    $$
    \geq 10^{-4} b - 10^5\delta_0\eps > 10^{-5}\eps \, ,
    $$
    as $\delta_0 = 10^{-10}$.
\end{proof}

\section{Proof of Theorem \ref{maintheorem}}\label{sec:main proof}

Let $T\, : \, \ZR\to \ZR_+$ be an arbitrary measurable function. For any $1> \eps >0$ let
$$
F_\eps^T := \{s \, : \, ||E(T(s),s)|-1| > \eps \} \, .
$$
We want to prove
$$
|F_\eps^T| \lesssim \frac{1}{\eps^2} \|f\|_2^2 \, ,
$$
with the implicit constant independent of $T$.

Recalling the sets \eqref{S set} and \eqref{R set}, we estimate
$$
|F_\eps^T| \leq |F_\eps^T\cap (S_\eps)^c| + |F_\eps^T\cap R_\eps^T| + \left| F_\eps^T \setminus \big( (S_\eps^c)\cup R_\eps^T \big) \right| \, .
$$
By the $L^2$ estimate for the maximal function and \eqref{plancherel with w - 1},
$$
|S_\eps^c| \lesssim \frac{1}{\eps^2} \| w-1\|_2^2 \lesssim \frac{1}{\eps^2} \|f\|_2^2 \, .
$$
By Lemma \ref{E and 1 over root w} and again \eqref{plancherel with w - 1},
$$
|F_\eps^T\cap R_\eps^T| \leq |\{ |1/\sqrt{w}-1| > \eps \}| \leq |\{ |w-1| > \eps/2 \}| \lesssim \frac{1}{\eps^2} \| w-1\|_2^2\leq \frac{1}{\eps^2} \|f\|_2^2 \, .
$$
It remains to estimate the measure of the set
$F := F_\eps^T \setminus ( (S_\eps^c)\cup R_\eps^T) = (F_\eps^T\cap S_\eps \cap (R_\eps^T)^c$. Let
$$F^{(n)} := \{ s \in F \, : \, T(s)= 2^n \} \, .$$
$F^{(n)}$, $n\in \ZZ$, is a partition of $F$. We will  estimate each $F^{(n)}$ separately. Fix some $n\in \ZZ$ and assume $|F^{(n)}| > 0$. Choose points $s_j\in F^{(n)}$, $j=1,\dots, N$, such that the intervals $I_j:=(s_j-\pi/2^n,s_j+\pi/2^n)$ cover $F^{(n)}$ and no three of them intersect, that is $\sum_{j=1}^N \Z1_{I_j} \leq 2$.

By \eqref{plancherel with w - 1}, we have
$$
\| f\|_{L^2(2^n/3, 2^n)}^2 \gtrsim \| 1/\sqrt{|E_{2^n/3\to 2^n}|} - 1| \|_{L^2(\ZR)}^2 \gtrsim \sum_{j=1}^N \| |E_{2^n/3\to 2^n}| -1 \|_{L^2(I_j)}^2 \, ,
$$
by Lemma \ref{E bigger than eps}, we continue
$$
\gtrsim \sum_{j=1}^N \eps^2 |I_j| \gtrsim \eps^2 |F^{(n)}| \, .
$$
Summing over all $n$, we obtain
$$
\| f\|_{L^2(0,\infty)}^2 \geq 2 \sum_{n\in \ZZ} \| f\|_{L^2(2^n/3, 2^n)}^2 \gtrsim \eps^2 |F^{(n)}| \geq \eps^2  |F| \, .$$
And the proof of the theorem is complete.

\section{Appendix}\label{appendix}
\begin{proof}[Theorem \ref{maintheorem} implies \eqref{lacunary linear Carleson}]
    Fix $f\in L^2$ with $\supp f \in (0,T)$, for some $T>0$. Then, also $f\in L^1$, and for small enough $\varepsilon>0$ we have $\|\varepsilon f \|_1 \leq 1/100$.
    
    Pick an arbitrary $\lambda>0$. By Theorem \ref{maintheorem}, we have, for $\varepsilon < \varepsilon_0 (f,\lambda)$,
    $$
    |\{s\, : \, \sup_n ||E_{\varepsilon f}(2^n,s)|-1| > \varepsilon \lambda \}| \lesssim \frac{1}{\varepsilon^2 \lambda^2 }\|\varepsilon f\|_2^2 = \frac{1}{\lambda^2} \|f\|_2^2 \, .
    $$
    By the linear approximation formulas \eqref{expansionfora}, we have
    $$
    ||E_{\varepsilon f} (2^n,s)|-1| = |\mathcal{F} (\varepsilon f \Z1_{[0,2^n]}) (s)| + O(\varepsilon^2) = \varepsilon |\mathcal{F} ( f \Z1_{[0,2^n]}) (s)| +O(\varepsilon ^2) \, .
    $$
    Plugging this in the above estimate, we have
    $$
    |\{s\, : \, \sup_n |\mathcal{F} ( f \Z1_{[0,2^n]}) (s)| > \lambda + O(\varepsilon) \}| \lesssim \frac{1}{\lambda^2} \|f\|_2^2 \, . 
    $$
    Taking $\varepsilon\to 0$, we conclude
    $$
    |\{s\, : \, \sup_n |\mathcal{F} ( f \Z1_{[0,2^n]}) (s)| > \lambda \}| \lesssim \frac{1}{\lambda^2} \|f\|_2^2 \, . 
    $$
    Taking $T\to +\infty$ and by a triangle inequality and reflection symmetry to allow any $\supp f \subset \ZR$, we conclude \eqref{lacunary linear Carleson} for any $f\in L^2(\ZR)$.
\end{proof}

\begin{proof}[Proof of Lemma \ref{gronwalllemma}]
We start with \eqref{generalizedgronwall}.
Let $E(t, z) = g_1(t) e^{i \phi_1}$ and $E^{\#}(t,z) = g_2 e^{i\phi_2}$. Then, considering \eqref{ODEforE}, we can write
\begin{equation*}
    g_1' + i g_1 \phi '= -i z g_1 + f g_2 e^{i (\phi_2 - \phi_1)}.
\end{equation*}
Taking the real part of the above equation, we get
\begin{equation*}
    g_1' = y g_1 + g_2 (\Re f) \cos (\phi_2 - \phi_1) - g_2(\Im f) \sin (\phi_2 - \phi_1).
\end{equation*}
$g_1$ is away from $0$, so this is equivalent to
\begin{equation*}
    \big( \log g_1 \big)' = y + \frac{g_2}{g_1} (\Re f) \cos (\phi_2 - \phi_1) - \frac{g_2}{g_1}(\Im f) \sin (\phi_2 - \phi_1).
\end{equation*}
As $E$ is Hermite-Biehler, $g_2\leq g_1$ and we get the desired estimate.

To get \eqref{gronwallforscatteringfunction}, we integrate \eqref{51ofpoltoratskiintro}.
\begin{equation*}
    |\mathcal{E} (t_1,z) - \mathcal{E} (t_2,z)| \leq \int_{t_1}^{t_2} e^{-t\Im z} |f| |E^\# (t,z)| dt 
\end{equation*}
\begin{equation*}
    \leq |E(t_1, \bar z)| \int_{t_1}^{t_2} |f| e^{(t-t_1)|\Im z|-t\Im z)} e^{\int_{t_1}^{t_2} |f|} dt \leq e^{(2t_2-t_1)|\Im z| + \int_{t_1}^{t_2}|f|} \int_{t_1}^{t_2} |f|.
\end{equation*}
\end{proof}

\bibliographystyle{amsalpha}

\bibliography{references}

\end{document}